\DeclareFontFamily{OML}{cyr}{} \DeclareFontShape{OML}{cyr}{m}{n}{
  <5> <6> <7> <8> <9> gen * wncyr
  <10> <10.95> <12> <14.4> <17.28> <20.74> <24.88> wncyr10
  }{}
\DeclareSymbolFont{rusletters}{OML}{cyr}{m}{n}
\DeclareSymbolFontAlphabet{\rusmath}{rusletters}
\DeclareMathSymbol\re{\rusmath}{rusletters}{"03}
\newtheorem{theorem}{Theorem}[section]
\newtheorem{proposition}[theorem]{Proposition}
\newtheorem{lemma}[theorem]{Lemma}
\newtheorem{corollary}[theorem]{Corollary}
\newcommand{\id}{\mathop{\rm id}\nolimits} 
\newcommand{\Orb}{\mathop{\rm Orb}\nolimits}
\newcommand{\E}{\EuScript E} 
\newcommand{\R}{\mathbb R} 
\newcommand{\eq}{y''=a^3(x, y)(y')^3+a^2(x, y)(y')^2+a^1(x,y)y'+a^0(x, y)}
\begin{document}

\title[Differential invariants  and equivalence]{Differential invariants  and equivalence of ODEs $\mathbf{y''=a^3(x,y)y'^3+a^2(x,y)y'^2+a^1(x,y)y'+a^0(x,y)}$}  

\author{Valeriy A. Yumaguzhin}

\email{yuma@diffiety.botik.ru} 

\date{20 January 2025}

\begin{abstract} This paper is devoted to ordinary differential equations of the form 
$$
  \eq.
$$ 
The algebra of all differential invariants of point transformations is constructed for these  equations in general position and the equivalence problem is solved for them.
\end{abstract}

\keywords{2-nd order ordinary differential equation, point transformation, differential invariant}

\subjclass{53A55, 53C10, 53C15, 34A30, 34A26, 34C20, 58F35}

\maketitle

\section{Introduction}
This paper is devoted to differential invariants  of point transformations of equations 
\begin{equation}\label{eq}
 \eq
\end{equation}
in general position.
 
There are different approaches to construct differential invariants of these equations:  E. Cartan \cite{Crtn}, R.B. Gardner \cite{Grdnr}, S. Lie \cite{Li1,Li2}, R. Liouville \cite{Lvll}, G. Thomsen \cite{Thmsn}, A. Tresse \cite{Trss1,Trss2},  R. Sharipov \cite{Shrpv1, Shrpv2}, and  V. Yumaguzhin \cite{Yum2}.

In work \cite{Yum2} a complete family of generators of the algebra of all differential invariants under consideration was constructed for the first time. In addition, a complete family of all differential syzygies of these generators are obtained. 
\smallskip

In this paper, it is represented: a direct description of this algebra and a solution of the equivalence problem of equations \eqref{eq} in general position under point transformations. 

\subsection{Preliminaries}
\subsubsection{Notations}
Let $M$ be a connected smooth $n$-dimensional manifold,  $x_1,\ldots, x_n$ be local coordinates in $M$, $x=(x_1,\ldots, x_n)$, and ${\mathcal G(M)}$ be a pseudogroup of all diffeomorphisms of $M$.

Let $\pi\colon E\rightarrow M$ be a $m$-dimensional vector bundle over $M$, $x_1,\ldots, x_n$, $u^1$, $\ldots$, $u^m$ be local coordinates in $\pi$, and $C^{\infty}(\pi)$ be the $C^{\infty}(M)$-module of smooth sections of $\pi$.

Let $\pi_k\colon J^k\pi\rightarrow M$ be the bundle of $k$-jets of sections of $\pi$,  
$\pi_{k,l} \colon J^k\pi\rightarrow J^l\pi$,\; $k> l$, be the projection defined by reductions of $k$-jets to their $l$-jets, and $x_1, \dots, x_n$,  $u^i_{\sigma}$, $i=1,\ldots,m$, $0\leq |\sigma|\leq k$,  where $\sigma$ is the multi-index $\{j_1\ldots j_r\}$, $|\sigma|=r$,\; $j_1,\ldots,j_r=1,\ldots, n$, be canonical coordinates in $J^k\pi$.
\medskip
 
 Any sections $S\in C^{\infty}(\pi)$ defines its $k$-jet prolongation $j_kS\in C^{\infty}(\pi_k)$ by the formula $j_kS(a)=j_a^kS$.
\subsubsection{Natural bundles} 
A bundle $\pi$ is called {\it natural} if for each $f\in {\mathcal G(M)}$ there is defined a diffeomorphism $f^{(0)}\colon E\rightarrow E$ such that the following conditions are satisfied:
\begin{enumerate}
 \item  $\pi\circ f^{(0)}=f\circ\pi$,
 \item  $\id_M^{(0)}=\id_E$, where $\id_M$ and $\id_E$ are identity maps of $M$ and $E$ respectively,
 \item  $(f\circ g)^{(0)}=f^{(0)}\circ g^{(0)}$\;  for all $f, g\in {\mathcal G(M)}$,
\end{enumerate}

\begin{corollary}
If $\pi$ is a natural bundle, then 
for each $f\in {\mathcal G(M)}$ its diffeomorphism $f^{(0)}\colon E\rightarrow E$ 
 is uniquely determined.
\end{corollary}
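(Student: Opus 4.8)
\medskip

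The plan is to read the corollary as asserting that the three defining conditions leave no freedom in the lift $f^{(0)}$, and to extract this rigidity from the functorial character of conditions (2) and (3). First I would record that (2) together with (3) says precisely that the assignment $f\mapsto f^{(0)}$ is a homomorphism from the pseudogroup $\mathcal{G}(M)$ into the group of diffeomorphisms of $E$, whereas condition (1) only records that each $f^{(0)}$ is fibered over $f$ and is therefore far too weak to pin it down. Hence the uniqueness must come entirely from the group law, not from (1).

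The key computation I would carry out applies condition (3) with $g=f^{-1}$ and then invokes condition (2):
\[
 f^{(0)}\circ (f^{-1})^{(0)}=(f\circ f^{-1})^{(0)}=\id_M^{(0)}=\id_E,
\]
and symmetrically $(f^{-1})^{(0)}\circ f^{(0)}=\id_E$. This shows that $f^{(0)}$ is invertible with inverse necessarily equal to $(f^{-1})^{(0)}$, so its inverse is itself a diffeomorphism by hypothesis and no extra regularity need be assumed. For the uniqueness statement proper, I would let $\phi\colon E\to E$ be any diffeomorphism eligible to serve as the lift of $f$ compatibly with the already-fixed maps $g^{(0)}$, i.e. one satisfying $(f\circ g)^{(0)}=\phi\circ g^{(0)}$ for all $g$. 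Taking $g=\id_M$ and using (2) gives $\phi=f^{(0)}$ at once; for general $g$ one instead cancels $g^{(0)}$ on the right using the invertibility just obtained, again concluding $\phi=f^{(0)}$.

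The main obstacle I anticipate is conceptual rather than computational: one must be precise about what ``uniquely determined'' is allowed to constrain, because conditions (2) and (3) couple $f^{(0)}$ to the whole family $\{g^{(0)}\}$ rather than to $f$ in isolation, and condition (1) alone admits a large family of fibered candidates. The care needed is to perform cancellations only by maps whose invertibility has already been secured through the inverse formula $(f^{(0)})^{-1}=(f^{-1})^{(0)}$; once that formula is established, every cancellation step is legitimate and the argument closes.
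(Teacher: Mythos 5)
Your central uniqueness step is circular. You define an eligible competitor $\phi$ as a diffeomorphism satisfying $(f\circ g)^{(0)}=\phi\circ g^{(0)}$ for all $g$, where $(f\circ g)^{(0)}$ denotes the already-fixed lift. But the instance $g=\id_M$ of this hypothesis reads $f^{(0)}=\phi\circ\id_E$, i.e.\ it literally \emph{is} the conclusion $\phi=f^{(0)}$; so ``taking $g=\id_M$'' proves nothing --- the hypothesis you impose on $\phi$ contains the statement you want. The only non-vacuous content of your proposal is the fallback: for $g\neq\id_M$ the hypothesis involves only the fixed lifts of maps other than $f$ (namely $f\circ g$ and $g$), and the cancellation $\phi=(f\circ g)^{(0)}\circ\bigl(g^{(0)}\bigr)^{-1}=(f\circ g)^{(0)}\circ(g^{-1})^{(0)}=f^{(0)}$, using your (correct) inverse formula and condition (3) for the fixed family, is a genuine argument. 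What it establishes, however, is only a rigidity statement: within a \emph{given} natural structure, the lift of a single $f$ cannot be replaced by anything else compatible with the remaining lifts. (Also, the definition already requires each $f^{(0)}$ to be a diffeomorphism, so your remark that invertibility ``need not be assumed'' is moot.)

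Be aware that the stronger reading --- that conditions (1)--(3) by themselves, given only the bundle $\pi$, determine the family $\{f^{(0)}\}$ --- is false, so no argument of the kind you attempt could prove it: if $\Phi\colon E\to E$ is any diffeomorphism with $\pi\circ\Phi=\pi$, then $f^{[0]}=\Phi\circ f^{(0)}\circ\Phi^{-1}$ again satisfies (1)--(3), and differs from $f^{(0)}$ unless $\Phi$ commutes with every $f^{(0)}$. This is consistent with the paper, which offers no proof at all: the corollary is read off from the definition, where the assignment $f\mapsto f^{(0)}$ is part of the naturalness data (and for the concrete bundle of ODE coefficients it is fixed by the explicit formulas \eqref{Lft0}), so that ``the natural lift of $f$'' is well defined. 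Your general-$g$ cancellation is a correct formalization of this weak statement; if you rewrite the proof, lead with it, state explicitly that all lifts other than that of $f$ are held fixed, and drop the $g=\id_M$ shortcut.
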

For this reason we will call this  unique diffeomorphism $f^{(0)}$ as the natural lift of $f$ to the bundle $\pi$.

A section of a natural bundle is called {\it a differential structure} (on the base of $\pi$), see \cite{ALV}.
\medskip 

{\it Differential order} of a natural bundle is the smallest natural number $r$  such that for any $f\in\mathcal G(M)$ the value $f^{(0))}(\theta_0)$ at a point $\theta_0\in E$, where $p=\pi(\theta_0)$ is a point from domain of $f$, is determined by the $r$-jet $j_p^rf$.

\subsubsection{Natural bundles of ODEs \eqref{eq}} 
In the rest of this article we will denote by $\pi$ the following vector bundle
$$  
  \pi\colon E \longrightarrow M\quad \pi\colon ( p,\,u^1,\,\dots ,\, u^4\,)\mapsto p, \quad p=(x, y),
$$  
where $M=\R^2$, $E=M\times\R^4$, $x, y$ are standard coordinates on $M$, $u^1,\dots, u^4$ are standard coordinates on the fiber $\R^4$ of $\pi$. 

Let $\E$ be an arbitrary ODE \eqref{eq}. Following \cite{Yum2}, we will consider every equation $\E$ as a geometric structure. Namely, identify every equation $\E$ with the section $S_{\E}$ of $\pi$
\begin{equation}\label{EqSctn}
  S_{\E}\colon M\longrightarrow E,\quad  S_{\E}\colon p\mapsto \big(\,p,\,a^0(p),\,\dots ,\,  a^3(p)\,\big).
\end{equation}

As a result, the set of all equations $\E$ is identified with the set $C^{\infty}(\pi)$ of all sections $S_{\E}$ of $\pi$. 
\medskip

It is well known, \cite{Arnld}, that each diffeomorphism  $f\in\mathcal G(M),\; f\colon (x,y)\mapsto \big(\tilde x(x,y), \tilde y(x,y)\big) $ generates the transformation of every equation $\E$ to the equation $\tilde\E$ of the same form:
$$ 
\tilde y''=\tilde a^3(\tilde x,\tilde y)(\tilde y')^3+\tilde a^2(\tilde x,\tilde y)(\tilde y')^2+\tilde a^1(\tilde x,\tilde y)\tilde y'+\tilde a^0(\tilde x, \tilde y).
$$
The coefficients of $\tilde\E$ are expressed in terms of the coefficients of $\E$ and the $2$-jets of the inverse transformation $f^{-1}$ by formulas:
\begin{equation}\label{CffTrnsfrm}
 \tilde a^i(\tilde p)=\Phi^i\bigl(\,a^0(f^{-1}(\tilde p)),\ldots,a^3(f^{-1}(\tilde p)),\,
 j^2_{\tilde p}f^{-1}\,\bigr),\quad i=0,1,2,3.
\end{equation}
It follows that the equations
\begin{equation}\label{Lft0}   
  \tilde p=f(p),\quad 
   \tilde u^i=\Phi^{i-1}\bigl(\,u^1,\ldots,u^4,\,j^2_{f(p)}f^{-1}\bigr),\,\; i=1,\ldots,4
\end{equation}
define the diffeomorphism $f^{(0)}$ of the total space $E$ of $\pi$. 

These diffeomorphisms $f^{(0)}$ satisfy the conditions of naturalness: 
$$
\pi\circ f^{(0)}=f\circ\pi,\quad \id_M^{(0)}=\id_E,\quad
(f\circ h)^{(0)}=f^{(0)}\circ h^{(0)},\;\; f,h\in\mathcal{G}(M). 
$$  

Thus, the bundle $\pi$ is a natural bundle (of equations $\E$),  and every equation $\E$, considering as a section of $\pi$, is a geometric structure on $M$. 

It follows from \eqref{Lft0} that differential order of the natural bundle $\pi$ is 2.

\subsubsection{Natural bundles of jets of ODEs \eqref{eq}} 

Equations (\ref{Lft0}), represented in the terms of the corresponding sections, has the form:  
$$
    S_{\tilde\E}=f^{(0)}\circ S_{\E}\circ f^{-1},\quad f\in\mathcal G(M).
$$

Every $f\in\mathcal G(M)$ is lifted to the diffeomorphism $f^{(k)}$ of the bundle $J^k\pi$ by the formula
\begin{equation}\label{LftTr}
f^{(k)}(\,j^k_pS\,)=j^k_{f(p)}\bigl(f^{(0)}\circ S\circ f^{-1}\bigr),\quad k=0,1,2,\ldots  .
\end{equation}

\begin{lemma}
The bundle $\pi_k\colon J^k\pi\rightarrow M$ with lifting \eqref{LftTr} of diffeomorphisms $\mathcal G(M)$ in the bundle $\pi_k$ is natural.
\end{lemma}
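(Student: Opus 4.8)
The plan is to verify directly that the assignment $f \mapsto f^{(k)}$ defined by \eqref{LftTr} satisfies the three naturalness axioms, using the fact (already established in the excerpt) that $f \mapsto f^{(0)}$ is a natural lift on $\pi$. First I would check that \eqref{LftTr} genuinely produces a well-defined diffeomorphism of $J^k\pi$ covering $f$, i.e.\ that the prolongation formula sends the $k$-jet $j^k_p S$ to a point depending only on $j^k_p S$ (and on $f$) and not on the chosen representative section $S$. This is the crucial well-definedness point: since $f^{(0)} \circ S \circ f^{-1}$ is again a smooth local section of $\pi$, its $k$-jet at $f(p)$ makes sense, and because two sections with the same $k$-jet at $p$ are carried by the smooth map $f^{(0)}\circ(\cdot)\circ f^{-1}$ to sections agreeing to order $k$ at $f(p)$, the right-hand side depends only on $j^k_p S$. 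I would also record that $\pi_k \circ f^{(k)} = f \circ \pi_k$, which is immediate from the base point $f(p)$ appearing in \eqref{LftTr}.

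Next I would verify the two algebraic axioms. For the identity axiom, substituting $f=\id_M$ and using $\id_M^{(0)}=\id_E$ (a property of the natural bundle $\pi$) gives $\id_M^{(k)}(j^k_pS) = j^k_p(\id_E \circ S \circ \id_M) = j^k_p S$, so $\id_M^{(k)} = \id_{J^k\pi}$. For the cocycle axiom I would compute, for $f,h \in \mathcal G(M)$,
\begin{equation}\label{cocyclecomp}
 (f\circ h)^{(k)}(j^k_pS) = j^k_{(f\circ h)(p)}\bigl((f\circ h)^{(0)}\circ S\circ (f\circ h)^{-1}\bigr),
\end{equation}
and then use $(f\circ h)^{(0)} = f^{(0)}\circ h^{(0)}$ together with $(f\circ h)^{-1} = h^{-1}\circ f^{-1}$ to rewrite the inner section as $f^{(0)}\circ\bigl(h^{(0)}\circ S\circ h^{-1}\bigr)\circ f^{-1}$. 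Applying the definition \eqref{LftTr} twice, first with respect to $h$ at $p$ and then with respect to $f$ at $h(p)$, identifies the right-hand side of \eqref{cocyclecomp} with $f^{(k)}\bigl(h^{(k)}(j^k_pS)\bigr)$, giving $(f\circ h)^{(k)} = f^{(k)}\circ h^{(k)}$.

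I do not expect any serious obstacle here; the statement is essentially a formal consequence of $\pi$ being natural, since jet prolongation is functorial and commutes with composition of the relevant maps. The one point deserving genuine care is the well-definedness argument in the first step, where one must be certain that $f^{(k)}$ descends to jets rather than depending on representative sections; this rests on the smoothness of $f^{(0)}$, $f$, and $f^{-1}$ and on the standard fact that composition with a smooth map does not lower the order of contact of two sections. Once that is secured, the remaining verifications are direct substitutions, so I would present them compactly rather than expanding the chain rule in coordinates. Finally, I would remark that \eqref{LftTr} exhibits $f^{(k)}$ as determined by the $(k+2)$-jet of $f$ at $p$, since $f^{(0)}$ depends on $j^2 f^{-1}$ by \eqref{Lft0} and prolonging to order $k$ differentiates $k$ further times, so the differential order of $\pi_k$ is $k+2$.
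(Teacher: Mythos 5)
Your proof is correct. Note that the paper states this lemma without any proof at all, treating it as a formal consequence of the naturalness of $\pi$ together with the functoriality of jet prolongation; your verification --- well-definedness of \eqref{LftTr} on $k$-jets via preservation of order of contact, compatibility with $\pi_k$, and the identity and cocycle axioms deduced from $\id_M^{(0)}=\id_E$ and $(f\circ h)^{(0)}=f^{(0)}\circ h^{(0)}$ --- is precisely the routine argument the paper leaves implicit, so there is nothing to compare beyond saying you have filled in the omitted details correctly. One small flag on your closing side remark: from \eqref{LftTr} and \eqref{Lft0} one gets immediately only that $f^{(k)}(\theta_k)$ is determined by the $(k+2)$-jet of $f$ at the base point, i.e.\ that the differential order of $\pi_k$ is \emph{at most} $k+2$; to claim it equals $k+2$ one would still have to check that the dependence on the top-order derivatives of $f$ is genuine. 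This does not affect the lemma itself, which your argument fully establishes.
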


\subsubsection{Orbits, equations, and differential invariants in general position} 
As a result of the action $\mathcal G(M)$ on $J^k\pi$, the latter is divided into orbits $\Orb^k_r$, where $r$ is the dimension of an isotropy algebra of point of $\Orb^k_r$.

We will call an orbit of codimension zero as {\it an orbit of general position}, other ones we will call as {\it degenerate orbits}. 

We get from  \cite{Yum2}:
\begin{itemize}
 \item The bundles $J^0\pi$ and $J^1\pi$ are orbits of the action of $\mathcal{G}(\R^2)$.
 \item  The bundle $J^2\pi$ is the union of two orbits: $\Orb^2_4$ is an orbit of general position and $\Orb^2_6$  is a degenerate orbit. The first one is defined by the inequality $(L_1, L_2)\neq  0$, where $L_1, L_2$ are defined by equations \eqref{L1L2}.
 \item The bundle $J^3\pi$ is the union of four orbits: $\Orb^3_0$, $\Orb^3_1$, $\Orb^3_2$, and $\Orb^3_6$. The first one is an orbit of general position, it is defined by  inequalities $(L_1, L_2)\neq 0$ and $L_3\neq 0$, where $L_3$ is defined by equation \eqref{L3}, the remaining orbits are degenerate.
 \item The bundle $J^k\pi$, $k\geq 4$, is union of degenerate orbits.  
\end{itemize}
\smallskip

By {\it a differential invariant of order $k$}, $k=0, 1, 2, \ldots$, we mean a smooth function on $J^k\pi$ invariant with respect to all lifted diffeomorphisms $f^{(k)}$, $f\in\mathcal G(M)$. 
\smallskip

By {\it a differential invariant of order $k$ of the equation $\E$} we mean the  restrictions of some differential invariant of order $k$ on the graph of the section $j_kS_{\E}$. 
\smallskip

Let $I_1$ and $I_2$ be differential invariants of order $k$. We will say that they are {\it in  general position in a domain $O\subset J^k\pi$}, if $\widehat d\,I_1\wedge \widehat d\, I_2 \neq 0$ in this domain.
\smallskip

By {\it a tensor differential invariant of order $k$} we mean a smooth tensor field on $J^k\pi$ invariant with respect to all lifted diffeomorphisms $f^{(k)}$, $f\in\mathcal G(M)$. 

\subsubsection{Total differentials}\label{TD}
Let $f$ be a smooth function on the manifold $J^k\pi$. Then its {\it total differential} $\widehat d f$ is the horizontal differential 1-form on the manifold $J^{k+1}\pi$, which satisfies the following condition
\begin{equation}\label{TDff0}       
   (j_{k+1}S)^*(\widehat d f)=d\bigl( (j_kS)^*(f)\bigr)
\end{equation} 
for all $S\in C^{\infty}(\pi)$.

In canonical coordinates the total differential has the form
\begin{equation}\label{TDff} 
\widehat d f=\frac{df}{dx}dx + \frac{df}{dy}dy,
\end{equation} 
where $\displaystyle\frac{d}{d x}=\frac{\partial}{\partial x}+u_i^j\frac{\partial}{\partial u^j}+\ldots+u^j_{x\, \sigma}\frac{\partial}{\partial u^j_{\sigma}}+\ldots$ and 
$\displaystyle\frac{d}{d y}=\frac{\partial}{\partial y}+u_i^j\frac{\partial}{\partial u^j}+\ldots+u^j_{\sigma\,y}\frac{\partial}{\partial u^j_{\sigma}}+\ldots$.
are {\it total derivatives}.

\subsubsection{Tresse derivatives}
Let differential invariants $I_1$ and $I_2$ be in general position. Then {\it Tresse derivatives} $\displaystyle\frac{df}{dI_i}$, $i=1,2$, where $f\in C^{\infty}(J^{\infty}(\pi))$, are defined by the relation
$$
\displaystyle\widehat d f=\frac{df}{dI_i}\widehat d I_i.
$$

\subsection{Tensor differential invariants} In these section, we introduce necessary for us tensor differential invariants. Their detailed computations can be found in \cite{Yum2}.
  
\subsubsection{Invariant vector fields} The following vector fields are tensor differential invariants defined on $\Orb^3_0$, (see \cite{Yum2}, p. 302, ref. (49))
\begin{equation}\label{xi1xi2}
  \xi_1=\frac{1}{(L_3)^{2/5}}(L_2\frac{\partial}{\partial x}-L_1\frac{\partial}{\partial y}),\quad
  \xi_2=\frac{1}{(L_3)^{4/5}}(\Psi_2\frac{\partial}{\partial x}-\Psi_1\frac{\partial}{\partial y}),
\end{equation}
where $L_1$, $L_2$, $L_3$, and $\Psi_1$, $\Psi_2$ are defined  by the following equations (see \cite{Yum2}, p. 291, ref. (26); p. 292, ref. (28); p. 301 )
\begin{equation}\label{L1L2}  
 \begin{aligned}
   L_1  = 3u^1_{yy}-2u^2_{xy}+u^3_{xx}\qquad\phantom{+3u^4u^1_{x}-3u^3u^1_{y}+2u^2u^2_{y}\qquad}\\
   +3u^4u^1_{x}-3u^3u^1_{y}+2u^2u^2_{y} 
   - u^2u^3_{x}-3u^1u^3_{y}+6u^1u^4_{x}, \\
   L_2 = 3u^4_{xx}-2u^3_{xy}+u^2_{yy}\qquad\phantom{+3u^4u^1_{x}-3u^3u^1_{y}+2u^2u^2_{y}\qquad}\\ 
   -3u^1u^4_{y}+3u^2u^4_{x}-2u^3u^3_{x}+u^3u^2_{y}+ 3u^4u^2_{x}-6u^4u^1_{y},
 \end{aligned}
\end{equation}
\begin{multline}\label{L3} 
\quad L_3  =  L_2(L_1\frac{dL_2}{dx}-L_2\frac{dL_1}{dx})-L_1(L_1\frac{dL_2}{dy}-L_2\frac{dL_1}{dy})\\
         +L_1^3u^4-L_1^2L_2u^3+L_1L_2^2u^2-L_2^3u^1,
\end{multline}
\begin{equation}\label{Psi1Psi2}
 \begin{aligned} 
  \Psi_1 = - L_1^2u^3 + 2L_1L_2u^2 - 3L_2^2u^1 - L_1\frac{dL_1}{dy} + 4L_1\frac{dL_2}{dx} -3L_2\frac{dL_1}{dx}, \\ 
  \Psi_2 =  - L_2^2u^2 + 2L_1L_2u^3  - 3L_1^2u^4+L_2\frac{dL_2}{dx} - 4L_2\frac{dL_1}{dy} +3L_1\frac{dL_2}{dy}.
 \end{aligned}
\end{equation}
\medskip

The fields $\xi_1$ and $\xi_2$ are linear independent in every point $\theta_3\in\Orb^3_0$ (see \cite{Yum2}, p. 302, Proposition 15).

These fields are known tensor differential invariants, see \cite{Shrpv1, Shrpv2}.

\subsubsection{The invariant volume form} The following tensor differential invariant on $\Orb^3_0$ (see \cite{Yum2}, p. 301, ref. (47) )
\begin{equation}\label{nu}
 \nu=L_3^{1/5}(dx_1\wedge dx_2),
\end{equation}
was first obtained by R. Liouville in \cite{Lvll}.

\section{The algebra of all differential invariants of ODEs \eqref{eq}} 

\subsection{Coordinates generated by invariants} Taking into account that $\xi_1$, $\xi_2$, and $\nu$ are tensor  differential invariants of order 3 defined on $\Orb_0^3$,  we get that Lie derivatives $\xi_1(\nu)$, $\xi_2(\nu)$ are tensor differential invariants of order 4 defined on $\pi_{4,3}^{-1}(\Orb_0^3)$. These invariants define differential invariants $I_1$ and  $I_2$ of order 4 by the formulas:

$$
  I_1= \frac{\xi_1(\nu)}{\nu},\quad I_2= \frac{\xi_2(\nu)}{\nu}  
$$
which are also defined on  $\pi_{4,3}^{-1}(\Orb_0^3)$.

Let $O$ be a domain in $\pi_{4,3}^{-1}(\Orb^3_0)$, where the invariants $I_1$ and $I_2$ are in general position, i.e. $\widehat d\,I_1\wedge \widehat d\, I_2 \neq 0$ in this domain.

Let $S_{\E}\in C^{\infty}(\pi)$ and $O'$ be a domain in $M$ such that 
$$
  j_4S_{\E}(O')\subset O.
$$ 
Then we will say that {\it the equation $\E$ (the section $S_{\E}$) is in general position in $O'$}. 

The function
$$
  (j_4S_{\E})^*I_i=I_i\circ j_4S_{\E},\quad i=1,2,
$$ 
is the restriction of invariant $I_i$ on the graph of section $j_4S_{\E}$. Thus it is 
a differential invariants of order 4 of the equation $\E$. 
\begin{proposition} 
The differential invariants of $\E$: $(j_4S_{\E})^*I_1$,\;$(j_4S_{\E})^*I_2$, are  independent coordinates on $O'$.
\end{proposition}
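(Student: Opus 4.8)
The goal is to establish that the two functions $(j_4S_{\E})^*I_1$ and $(j_4S_{\E})^*I_2$ on the two-dimensional domain $O'\subset M$ have pointwise linearly independent differentials; by the inverse function theorem this is exactly what it means for them to be independent coordinates on $O'$. The plan is to transport the general-position hypothesis on $I_1,I_2$, which is stated on the jet space, down to the base $M$ by means of the defining property \eqref{TDff0} of the total differential.

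First I would record the order bookkeeping. Each $I_i=\xi_i(\nu)/\nu$ is a differential invariant of order $4$, so its total differential $\widehat d\,I_i$ is a horizontal $1$-form living on $J^5\pi$. Applying \eqref{TDff0} with $k=4$ to each $I_i$ gives $(j_5S_{\E})^*(\widehat d\,I_i)=d\big((j_4S_{\E})^*I_i\big)$. Since pullback of differential forms is a homomorphism of exterior algebras, wedging these two identities yields
\[
d\big((j_4S_{\E})^*I_1\big)\wedge d\big((j_4S_{\E})^*I_2\big)=(j_5S_{\E})^*\big(\widehat d\,I_1\wedge\widehat d\,I_2\big).
\]
Thus the independence of the two invariants of $\E$ on $O'$ is reduced to showing that the right-hand side is nowhere zero.

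To evaluate the right-hand side I would use the coordinate form \eqref{TDff} of the total differential: writing $\widehat d\,I_i=\frac{dI_i}{dx}\,dx+\frac{dI_i}{dy}\,dy$ in terms of total derivatives, one has $\widehat d\,I_1\wedge\widehat d\,I_2=J\,dx\wedge dy$, where $J=\frac{dI_1}{dx}\frac{dI_2}{dy}-\frac{dI_1}{dy}\frac{dI_2}{dx}$ is a function on $J^5\pi$. The general-position hypothesis $\widehat d\,I_1\wedge\widehat d\,I_2\neq 0$ on $O$ is precisely the statement $J\neq 0$ over $O$. Because $j_5S_{\E}$ is a section of $\pi_5$, it acts as the identity on the base coordinates, so $(j_5S_{\E})^*(dx\wedge dy)=dx\wedge dy$, and the displayed equality becomes $(J\circ j_5S_{\E})\,dx\wedge dy$. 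Finally, since $\pi_{5,4}\circ j_5S_{\E}=j_4S_{\E}$ and $j_4S_{\E}(O')\subset O$, every point $j_5S_{\E}(p)$ with $p\in O'$ projects into $O$, whence $J\circ j_5S_{\E}\neq 0$ on $O'$. This proves nonvanishing, hence independence.

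There is no real analytic difficulty here; the only point requiring care is the shift of jet order from $4$ to $5$, namely remembering that $\widehat d\,I_i$ must be pulled back by $j_5S_{\E}$ rather than by $j_4S_{\E}$, and checking through $\pi_{5,4}\circ j_5S_{\E}=j_4S_{\E}$ that the domain $O'$ really lands where the general-position inequality holds. Everything else is a formal manipulation of \eqref{TDff0} and \eqref{TDff} together with the fact that a section fixes the base volume form $dx\wedge dy$.
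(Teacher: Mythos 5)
Your proof is correct and follows essentially the same route as the paper: pull back $\widehat d\,I_1\wedge\widehat d\,I_2$ by $j_5S_{\E}$, use the defining property \eqref{TDff0} of the total differential to identify the result with $d\big((j_4S_{\E})^*I_1\big)\wedge d\big((j_4S_{\E})^*I_2\big)$, and conclude that this $2$-form is nowhere zero on $O'$. In fact you make explicit a step the paper leaves implicit — namely that horizontality gives $\widehat d\,I_1\wedge\widehat d\,I_2=J\,dx\wedge dy$ with $J\neq 0$ over $O$, and since a jet section fixes the base form $dx\wedge dy$ and $\pi_{5,4}\circ j_5S_{\E}=j_4S_{\E}$ lands in $O$, the pullback cannot vanish.
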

\begin{proof}
General position of the invariants $I_1$, $I_2$ means that $\widehat d I_1\wedge \widehat d I_2\neq 0$. It follows that 
\begin{multline*}
(j_5S_{\E})^*(\widehat d I_1\wedge \widehat d I_2)=(j_5S_{\E})^*\widehat d I_1\wedge
(j_5S_{\E})^*\widehat d I_2=\\ d \big( (j_4S_{\E})^*I_1\big)\wedge d \big( (j_4S_{\E})^*I_2\big))\neq 0. 
\end{multline*}
The last inequality means that the differential invariants of $\E$: $(j_4S_{\E})^*I_1$ and $(j_4S_{\E})^*I_2$ 
are independent coordinates on the domain $O'\subset M$. 
\end{proof}

Let us denote these coordinates as $\tilde x=(j_4S_{\E})^*I_1$ and $\tilde y=(j_4S_{\E})^*I_2$. 

\begin{corollary}\label{2.2}  Let $(j_4S_{\E})^*(I_1, I_2)_=\big((4S_{\E})^*I_1, (4S_{\E})^*I_2\big)$. 
Then the mapping 
\begin{equation}\label{g} 
  g= (j_4S_{\E})^*(I_1, I_2)\colon O' \longrightarrow O',\;\; g\colon (x, y)\mapsto \big(\tilde x(x, y), \tilde y(x, y)\big)
\end{equation}
 is a diffeomorphism.
\end{corollary}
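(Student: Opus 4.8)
The plan is to recognize that this corollary is essentially an immediate consequence of the preceding Proposition, restated as an assertion about the map $g$. First I would observe that by construction the components of $g$ are precisely $\tilde x=(j_4S_{\E})^*I_1$ and $\tilde y=(j_4S_{\E})^*I_2$, so that $g$ is the map $(x,y)\mapsto\big(\tilde x(x,y),\tilde y(x,y)\big)$ whose Jacobian matrix is $\bigl(\partial(\tilde x,\tilde y)/\partial(x,y)\bigr)$.

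Next I would translate the conclusion of the Proposition into a statement about this Jacobian. The Proposition asserts that $\tilde x$ and $\tilde y$ are independent coordinates on $O'$, which is the same as the nonvanishing of the wedge product $d\tilde x\wedge d\tilde y$ throughout $O'$. Writing $d\tilde x\wedge d\tilde y=\det\bigl(\partial(\tilde x,\tilde y)/\partial(x,y)\bigr)\,dx\wedge dy$, the independence condition is exactly the nonvanishing of the Jacobian determinant of $g$ at every point of $O'$.

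Then I would invoke the Inverse Function Theorem: since $g$ is smooth and its Jacobian is everywhere nonsingular on $O'$, the map $g$ is a local diffeomorphism. Because $\tilde x,\tilde y$ serve as coordinates on $O'$, the map $g$ is in fact a bijection onto its image with smooth inverse, hence a diffeomorphism.

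The main obstacle is the global injectivity needed to promote the local diffeomorphism to a genuine diffeomorphism: a nonvanishing Jacobian alone yields only a local diffeomorphism. Here this is subsumed in the phrase \emph{independent coordinates on $O'$}, which presupposes that $\tilde x,\tilde y$ form a bona fide coordinate chart on $O'$ (equivalently, that $O'$ has been chosen small enough for $g$ to be injective). With that understanding the bijectivity is built into the setup, and no argument beyond the Inverse Function Theorem is required.
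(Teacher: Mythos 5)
Your proposal is correct and matches the paper's treatment: the paper states this corollary with no separate proof, regarding it as an immediate restatement of the preceding Proposition (independence of the coordinates $\tilde x,\tilde y$ on $O'$), which is exactly the content you spell out via the nonvanishing Jacobian and the Inverse Function Theorem. Your remark that global injectivity is built into the phrase ``independent coordinates on $O'$'' is precisely the reading the paper intends.
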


In the coordinates $\tilde x, \tilde y$, the ODE $\E$ has the form $\tilde\E$:
\begin{equation}\label{tildeE}  
 \tilde y''=\tilde J_3(\tilde x,\tilde y)(\tilde y')^3+\tilde J_2(\tilde x,\tilde y)(\tilde y')^2+\tilde J_1(\tilde x,\tilde y)\tilde y'+\tilde J_0(\tilde x, \tilde y).
\end{equation}  
where the coefficients $\tilde J_3(\tilde x,\tilde y), \ldots, \tilde J_0(\tilde x,\tilde y)$ are differential invariants. The section $S_{\tilde\E}$, see \eqref{EqSctn}, has the form  
$$
S_{\tilde\E}=\big(\, \tilde J_3(\tilde x,\tilde y),\, \tilde J_2(\tilde x,\tilde y),\, \tilde J_1(\tilde x,\tilde y),\, \tilde J_0(\tilde x,\tilde y) \,\big).
$$

Let $g$ be the diffeomorphism defined by \eqref{g}. Then 
$$
   S_{\tilde\E}=g^{(0)}\circ S_{\E}\circ g^{-1}.
$$

\begin{proposition}\label{InvrntSctn}  
The section $S_{\tilde\E}$ is invariant w.r.t. the action of $\mathcal G(M)$ on $C^{\infty}(\pi)$, i.e. $f^0\circ S_{\tilde\E}\circ f^{-1}=S_{\tilde\E}$ for all $f\in \mathcal G(M)$.
\end{proposition}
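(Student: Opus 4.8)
The plan is to reduce the asserted identity to the statement that the normalization $S_{\E}\mapsto S_{\tilde\E}$ depends only on the $\mathcal G(M)$-orbit of $S_{\E}$: if replacing $S_{\E}$ by $f^{(0)}\circ S_{\E}\circ f^{-1}$ leaves the normalized section unchanged, then $S_{\tilde\E}$ is invariant. First I would record that $S\mapsto f^{(0)}\circ S\circ f^{-1}$ is a left action of $\mathcal G(M)$ on $C^{\infty}(\pi)$; this follows at once from the naturality relations $(f\circ h)^{(0)}=f^{(0)}\circ h^{(0)}$ and $\id_M^{(0)}=\id_E$, which also yield $(f^{-1})^{(0)}=(f^{(0)})^{-1}$.

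The key step is a transformation law for the invariant coordinate functions. For a section $S$ in general position write $g[S]=\big((j_4S)^*I_1,(j_4S)^*I_2\big)$, so that $g=g[S_{\E}]$ and $S_{\tilde\E}=g^{(0)}\circ S_{\E}\circ g^{-1}$. I claim that for every $f\in\mathcal G(M)$
\[
  g[f^{(0)}\circ S\circ f^{-1}]=g[S]\circ f^{-1}.
\]
To see this, evaluate the left-hand coordinate functions at a point $f(p)$ and use the prolongation formula \eqref{LftTr}, namely $f^{(4)}(j^4_pS)=j^4_{f(p)}(f^{(0)}\circ S\circ f^{-1})$, together with the $\mathcal G(M)$-invariance $I_i\circ f^{(4)}=I_i$ of the order-$4$ differential invariants $I_1,I_2$. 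This gives $\big(j_4(f^{(0)}\circ S\circ f^{-1})\big)^*I_i\circ f=(j_4S)^*I_i$, which is the displayed identity.

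Finally I would substitute this into the normalization. Put $S'=f^{(0)}\circ S_{\E}\circ f^{-1}$ and $g'=g[S']=g\circ f^{-1}$. Using $(g\circ f^{-1})^{(0)}=g^{(0)}\circ(f^{(0)})^{-1}$ and $(g\circ f^{-1})^{-1}=f\circ g^{-1}$, the normalized section of the transformed equation becomes
\[
  (g')^{(0)}\circ S'\circ (g')^{-1}
  =g^{(0)}\circ(f^{(0)})^{-1}\circ f^{(0)}\circ S_{\E}\circ f^{-1}\circ f\circ g^{-1}
  =g^{(0)}\circ S_{\E}\circ g^{-1}=S_{\tilde\E},
\]
the two inner pairs cancelling. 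Thus normalizing $S_{\E}$ and normalizing any $f^{(0)}\circ S_{\E}\circ f^{-1}$ produce one and the same section $S_{\tilde\E}$, which is exactly the invariance asserted.

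The step demanding the most care is the transformation law of the second paragraph: one must justify $I_i\circ f^{(4)}=I_i$ on the relevant locus, i.e. that $f^{(4)}$ preserves the general-position domain $O\subset\pi_{4,3}^{-1}(\Orb^3_0)$ on which $I_1,I_2$ are defined and in general position, and one must check that $g[S']$ is again a diffeomorphism onto the same domain. This is what guarantees that the cancellation in the last display is a genuine identity of sections over a common domain rather than a merely formal composition.
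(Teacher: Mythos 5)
Your argument is sound step by step, but you should be clear about what it actually establishes. Your key lemma --- the equivariance $g[f^{(0)}\circ S\circ f^{-1}]=g[S]\circ f^{-1}$, derived from $f^{(4)}(j^4_pS)=j^4_{f(p)}(f^{(0)}\circ S\circ f^{-1})$ and $I_i\circ f^{(4)}=I_i$ --- is correct, and your final display then shows that \emph{normalizing} the transformed section $f^{(0)}\circ S_{\E}\circ f^{-1}$ returns $S_{\tilde\E}$. In other words, you prove that the assignment $S_{\E}\mapsto S_{\tilde\E}$ is constant on $\mathcal G(M)$-orbits. That is not literally the printed formula $f^{(0)}\circ S_{\tilde\E}\circ f^{-1}=S_{\tilde\E}$: once $g'=g\circ f^{-1}$ is known, your last display is a formal cancellation that holds no matter what, and it gives no information about the section $f^{(0)}\circ S_{\tilde\E}\circ f^{-1}$ itself. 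So your closing sentence (``which is exactly the invariance asserted'') conflates two different statements: invariance of the normal-form \emph{assignment} versus $S_{\tilde\E}$ being a \emph{fixed point} of every lifted diffeomorphism.

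That said, your reading is the only tenable one, because the fixed-point statement is false for arbitrary $f$: it would make every diffeomorphism a point symmetry of $\tilde\E$, whereas an equation in general position (points of $\Orb^3_0$ have trivial isotropy) admits at most a discrete symmetry pseudogroup --- already invariance under all translations would force the $\tilde J_i$ to be constant, and invariance under scalings would then force them to vanish, landing outside general position. The paper's own proof overreaches in exactly the same way: it reduces the claim to ``$f\circ g=g$, since $\tilde x,\tilde y$ are differential invariants,'' which read literally forces $f=\id$ on the image of $g$; the rigorous content hiding behind that phrase is precisely your equivariance law in the form $g[f^{(0)}\circ S\circ f^{-1}]\circ f=g[S]$ (``in invariant coordinates, $f$ is the identity''). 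Moreover, your version is exactly what the Equivalence Theorem needs: there the Proposition is invoked for $h=g_2\circ f\circ g_1^{-1}$, and the required equality $h^{(0)}\circ S_{\tilde\E_1}\circ h^{-1}=S_{\tilde\E_1}$ amounts to $S_{\tilde\E_2}=S_{\tilde\E_1}$, i.e.\ to your orbit-invariance. So your route (equivariance of the normalizing map, then re-normalization) is genuinely different from the paper's one-line argument and is rigorous where the paper's is not; the one change to make is to state explicitly that you are proving invariance of the assignment $\E\mapsto S_{\tilde\E}$ --- the statement the paper actually uses --- rather than the identity as printed. Your closing caveats about domains are also warranted, since $g$ maps $O'$ onto a generally different domain and the paper's $g\colon O'\to O'$ is already an abuse of notation.
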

\begin{proof} $f^0\circ S_{\tilde\E}\circ f^{-1}=f^0\circ\big( g^{(0)}\circ S_{\E}\circ g^{-1} \big)\circ f^{-1}=(f\circ g)^{(0)}\circ S_{\E}\circ (f\circ g)^{-1}$. 

Taking into account that $\tilde x$ and $\tilde y$ are differential invariants we get that $f\circ g= g$. This leads to the equality $f^0\circ S_{\tilde\E}\circ f^{-1}=S_{\tilde\E}$.
\end{proof}
\begin{corollary}
Differential equation \eqref{tildeE} is invariant w.r.t. the action of $\mathcal G(M)$ on differential equations \eqref{eq}.
\end{corollary}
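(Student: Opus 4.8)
The plan is to deduce this Corollary directly from Proposition \ref{InvrntSctn} by invoking the dictionary that identifies equations of the form \eqref{eq} with sections of the natural bundle $\pi$. The starting point is the identification \eqref{EqSctn}, under which every equation $\E$ corresponds to the section $S_{\E}$ carrying its coefficients, and, conversely, every section of $\pi$ determines such an equation. This identification is a bijection between the set of all equations \eqref{eq} and $C^{\infty}(\pi)$.

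Next I would observe that this bijection intertwines the two actions of $\mathcal{G}(M)$: the action on equations given by a coordinate change $f\colon(x,y)\mapsto(\tilde x,\tilde y)$ together with the transformation law \eqref{CffTrnsfrm} for the coefficients, and the action on sections given by $S\mapsto f^{(0)}\circ S\circ f^{-1}$. This is exactly what the formula $S_{\tilde\E}=f^{(0)}\circ S_{\E}\circ f^{-1}$ records: applying $f$ to the equation $\E$ and rewriting the resulting equation $\tilde\E$ as a section yields precisely $f^{(0)}\circ S_{\E}\circ f^{-1}$. Consequently, an equation \eqref{eq} is invariant w.r.t. the action of $\mathcal{G}(M)$ if and only if its associated section is invariant w.r.t. the action $S\mapsto f^{(0)}\circ S\circ f^{-1}$ for all $f\in\mathcal{G}(M)$.

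Applying this equivalence to the equation \eqref{tildeE}, whose associated section is $S_{\tilde\E}$, reduces the claim to the invariance of $S_{\tilde\E}$, which is precisely the content of Proposition \ref{InvrntSctn}. The Corollary then follows.

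There is no serious obstacle here: the entire difficulty has already been absorbed into Proposition \ref{InvrntSctn}, and the present Corollary is essentially a translation of that result from the language of sections back to the language of equations. The only point demanding care is to make explicit that the identification \eqref{EqSctn} is $\mathcal{G}(M)$-equivariant, so that invariance of the section transfers to invariance of the equation; once this is recorded, nothing remains to compute.
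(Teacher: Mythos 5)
Your proposal is correct and matches the paper's intent: the paper states this corollary without proof, treating it as an immediate consequence of Proposition \ref{InvrntSctn} via the identification \eqref{EqSctn} of equations with sections of $\pi$, whose $\mathcal{G}(M)$-equivariance (encoded in \eqref{Lft0} and the relation $S_{\tilde\E}=f^{(0)}\circ S_{\E}\circ f^{-1}$) is exactly what you make explicit. Nothing in your argument deviates from the paper's framework; you have simply written out the translation step the author left implicit.
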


\subsection{The algebra of all differential invariants} The {\it $n$-invariants principle}, see \cite{LchYum}, in our case of equations \eqref{eq} leads to the following result.
\begin{theorem}
Let an equation $\E$ of form \eqref{eq} be in general position in the domain $O'$ and $\tilde\E$ be its expression \eqref{tildeE} in coordinates $\tilde x, \tilde y$. Then the algebra of all differential invariants of  the equation $\E$ consist of all smooth  functions of differential invariants $\tilde x, \tilde y, \tilde J_3, \tilde J_2, \tilde J_1, \tilde J_0$, and their Tresse derivatives.
\end{theorem}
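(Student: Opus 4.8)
The plan is to apply the $n$-invariants principle of \cite{LchYum} with $n=\dim M=2$, taking as the two invariants in general position the order-$4$ invariants $I_1,I_2$ — equivalently, the invariant coordinates $\tilde x,\tilde y$ furnished by Corollary~\ref{2.2} — and as the remaining generators the coefficients $\tilde J_3,\tilde J_2,\tilde J_1,\tilde J_0$ of the normal form \eqref{tildeE}. I would establish the two inclusions separately: first that every smooth function of $\tilde x,\tilde y,\tilde J_3,\ldots,\tilde J_0$ and their Tresse derivatives is a differential invariant of $\E$, and then, conversely, that every differential invariant of $\E$ has this form.

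For the first inclusion, recall that $S_{\tilde\E}$ is $\mathcal G(M)$-invariant by Proposition~\ref{InvrntSctn}, so its components $\tilde J_i(\tilde x,\tilde y)$ are differential invariants, while $\tilde x,\tilde y$ are invariants by construction. Since $\widehat d I_1\wedge\widehat d I_2\neq0$ on $O$, the Tresse derivatives $d/dI_1,d/dI_2$ are well-defined invariant derivations, and by the defining relation $\widehat d f=(df/dI_1)\,\widehat d I_1+(df/dI_2)\,\widehat d I_2$ they send invariants to invariants. Hence every smooth function of $\tilde x,\tilde y$, the $\tilde J_i$, and their iterated Tresse derivatives is again a differential invariant.

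The substance of the theorem lies in the converse inclusion. Let $I$ be an arbitrary differential invariant of order $k$, viewed as an invariant function on $J^k\pi$. Applying the lift formula \eqref{LftTr} to $f=g$ and $S=S_{\E}$ gives $j_kS_{\tilde\E}=g^{(k)}\circ j_kS_{\E}\circ g^{-1}$, whence invariance $I\circ g^{(k)}=I$ yields $(j_kS_{\E})^*I=g^*\big((j_kS_{\tilde\E})^*I\big)$. This reduces the computation of $I$ on $\E$ to its value on the normal form $\tilde\E$. Now $j_kS_{\tilde\E}(\tilde x,\tilde y)$ is completely determined by $\tilde x,\tilde y$ together with the partial derivatives of the $\tilde J_i$ with respect to $\tilde x,\tilde y$ up to order $k$, so $(j_kS_{\tilde\E})^*I$ is a smooth function of $\tilde x,\tilde y$ and those partials. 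Finally, since $\tilde x=I_1$ and $\tilde y=I_2$ are themselves invariants, pulling back the Tresse relation by $j_\infty S_{\E}$ identifies $d/dI_1,d/dI_2$ with the coordinate partials $\partial/\partial\tilde x,\partial/\partial\tilde y$; thus each partial of $\tilde J_i$ restricts to an iterated Tresse derivative of $\tilde J_i$, and $(j_kS_{\E})^*I$ becomes a smooth function of $\tilde x,\tilde y,\tilde J_3,\ldots,\tilde J_0$ and their Tresse derivatives, as claimed.

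The main obstacle is this converse inclusion, and in particular two steps inside it. The first is the reduction to the normal form through the equivalence $g$ of Corollary~\ref{2.2}: it relies on $\tilde x,\tilde y$ being genuine coordinates on $O'$ and on the residual isotropy fixing these invariant coordinates being trivial, so that the data $\tilde J_i$ carry all the invariant information of $\E$. The second is the identification of Tresse differentiation with partial differentiation in the invariant coordinates, which is precisely the mechanism converting the finitely many base invariants $\tilde J_i$, under repeated Tresse derivation, into all higher-order invariants. A secondary, purely bookkeeping, point is the tracking of orders — $I_1,I_2$ being of order $4$ and the $\tilde J_i$ of order up to $6$ — which shifts the order of the partials involved but does not alter the structure of the argument.
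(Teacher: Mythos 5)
Your proof is correct, and it is worth saying plainly that it does more than the paper does: the paper offers no proof of this theorem at all, but simply states that the $n$-invariants principle of \cite{LchYum} ``leads to'' the result. Your proposal invokes the same principle but then actually carries it out in this concrete situation, and the two mechanisms you isolate are exactly the ones the principle rests on: (i) the reduction of an arbitrary invariant $I$ to the normal form $\tilde\E$, via the lift formula \eqref{LftTr} applied to the invariant-coordinate diffeomorphism $g$ of Corollary~\ref{2.2}, giving $(j_kS_{\E})^*I=g^*\bigl((j_kS_{\tilde\E})^*I\bigr)$, so that $(j_kS_{\E})^*I$ is just the coordinate expression of $I$ evaluated on the jet coordinates $\bigl(\tilde x,\tilde y,\partial^{\sigma}\tilde J_i\bigr)$ of $j_kS_{\tilde\E}$; and (ii) the identification, after pullback by $j_{\infty}S_{\E}$, of the Tresse derivatives $d/dI_1$, $d/dI_2$ with the coordinate partials $\partial/\partial\tilde x$, $\partial/\partial\tilde y$, which turns those jet coordinates into iterated Tresse derivatives of the generators $\tilde J_3,\dots,\tilde J_0$. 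What the paper's citation buys is brevity; what your argument buys is self-containedness, making explicit that the only inputs are Corollary~\ref{2.2}, Proposition~\ref{InvrntSctn} (invariance of $S_{\tilde\E}$, hence the invariance of the coefficients $\tilde J_i$ asserted after \eqref{tildeE}), and the definition of Tresse derivatives, together with the order bookkeeping ($I_1,I_2$ of order $4$, $\tilde J_i$ of order $6$). One small criticism: the ``triviality of the residual isotropy'' that you list as something the converse inclusion relies on is never actually used in your argument --- the identity $(j_kS_{\E})^*I=g^*\bigl((j_kS_{\tilde\E})^*I\bigr)$ holds for any invariant $I$ once $g$ is a diffeomorphism --- so your proof is cleaner than your own commentary suggests.
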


\subsection{The equivalence problem} 
Let $\E_1$, $\E_2$ be equations of form \eqref{eq} in general position 
and $\tilde\E_1$, $\tilde\E_2$ be their forms in coordinates $(I_1\circ j_4S_{\E_1}$, $I_2\circ j_4S_{\E_1})$ and $(I_1\circ j_4S_{\E_2}$, $I_2\circ j_4S_{\E_2})$ respectively.  
 
\begin{theorem} Let $\E_1$ and $\E_2$ be equations of form \eqref{eq} in general positions in domains $O'_1\subset M$ and $O'_2\subset M$ respectively. Then there exists a diffeomorphism  $f\colon O'_1\rightarrow O'_2$ transforming $\E_1$ to $\E_2$ if and only if $\tilde \E_1=\tilde \E_2$.
\end{theorem}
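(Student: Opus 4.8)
The plan is to exploit the construction carried out just before the theorem: for each equation $\E$ in general position, Corollary~\ref{2.2} produces the diffeomorphism $g=(j_4S_\E)^*(I_1,I_2)$ of \eqref{g}, and one has the identity $S_{\tilde\E}=g^{(0)}\circ S_\E\circ g^{-1}$. In other words, every $\E$ is already point-equivalent to its normal form $\tilde\E$ through its attached map $g$. The content of the theorem is that $\tilde\E$ is a \emph{complete} point invariant, so the whole argument reduces to composing these canonical equivalences and tracking the prolonged action.

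For the sufficiency direction I would assume $\tilde\E_1=\tilde\E_2=:\tilde\E$ and set $f=g_2^{-1}\circ g_1\colon O_1'\to O_2'$, where $g_1,g_2$ are the diffeomorphisms attached to $\E_1,\E_2$ (their images coincide since the two normal forms share a domain). Using the naturalness relations $(h\circ h')^{(0)}=h^{(0)}\circ h'^{(0)}$ and $(h^{-1})^{(0)}=(h^{(0)})^{-1}$, I would compute
$$
f^{(0)}\circ S_{\E_1}\circ f^{-1}=(g_2^{-1})^{(0)}\circ\big(g_1^{(0)}\circ S_{\E_1}\circ g_1^{-1}\big)\circ g_2=(g_2^{-1})^{(0)}\circ S_{\tilde\E}\circ g_2=S_{\E_2},
$$
the last equality because $S_{\tilde\E}=g_2^{(0)}\circ S_{\E_2}\circ g_2^{-1}$. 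Hence $f$ transforms $\E_1$ into $\E_2$.

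For the necessity direction I would start from a given $f\colon O_1'\to O_2'$ with $S_{\E_2}=f^{(0)}\circ S_{\E_1}\circ f^{-1}$ and prove the key identity $g_2\circ f=g_1$. For a point $p\in O_1'$ the lift formula \eqref{LftTr} gives $j^4_{f(p)}S_{\E_2}=f^{(4)}(j^4_pS_{\E_1})$; since $I_1,I_2$ are differential invariants, i.e.\ $I_k\circ f^{(4)}=I_k$, the two coordinate maps agree:
$$
(g_2\circ f)(p)=\big(I_1(j^4_{f(p)}S_{\E_2}),I_2(j^4_{f(p)}S_{\E_2})\big)=\big(I_1(j^4_pS_{\E_1}),I_2(j^4_pS_{\E_1})\big)=g_1(p).
$$
With $g_2\circ f=g_1$ in hand I would conclude $S_{\tilde\E_2}=(g_2\circ f)^{(0)}\circ S_{\E_1}\circ(g_2\circ f)^{-1}=g_1^{(0)}\circ S_{\E_1}\circ g_1^{-1}=S_{\tilde\E_1}$, so that $\tilde\E_1=\tilde\E_2$.

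The routine part consists of the two chains of compositions, which only use functoriality of the natural lift. The main obstacle is the identity $g_2\circ f=g_1$: it is the precise statement that the invariant coordinates $(I_1,I_2)$ intertwine the point transformation $f$, and it requires careful bookkeeping of which jet space each map acts on, together with the correct use of the prolonged action \eqref{LftTr} and the invariance $I_k\circ f^{(4)}=I_k$. Once this intertwining is verified, the equality of the normal forms---including the automatic matching of the invariant coefficients $\tilde J_0,\dots,\tilde J_3$, which is subsumed in the equality of sections $S_{\tilde\E_1}=S_{\tilde\E_2}$---follows immediately.
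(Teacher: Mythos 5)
Your proof is correct, and its sufficiency half is essentially identical to the paper's: from $S_{\tilde\E_1}=S_{\tilde\E_2}$ and $S_{\tilde\E_i}=g_i^{(0)}\circ S_{\E_i}\circ g_i^{-1}$ one conjugates, using functoriality of the natural lift, to get $S_{\E_2}=(g_2^{-1}\circ g_1)^{(0)}\circ S_{\E_1}\circ (g_2^{-1}\circ g_1)^{-1}$, so $f=g_2^{-1}\circ g_1$ works. In the necessity half you take a genuinely different, and in fact tighter, route. The paper computes $S_{\tilde\E_2}=(g_2\circ f\circ g_1^{-1})^{(0)}\circ S_{\tilde\E_1}\circ (g_2\circ f\circ g_1^{-1})^{-1}$ and then removes the conjugation by citing Proposition \ref{InvrntSctn}, i.e.\ the claim that $S_{\tilde\E_1}$ is fixed by the lift of \emph{every} diffeomorphism; that proposition in turn rests on the step ``$f\circ g=g$'', which taken literally forces $f$ to be the identity on the image of $g$, and indeed a section in general position (zero-dimensional isotropy on $\Orb^3_0$) cannot be fixed by all of $\mathcal G(M)$. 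You bypass this entirely: from \eqref{LftTr} you get $j^4_{f(p)}S_{\E_2}=f^{(4)}(j^4_pS_{\E_1})$, and the invariance $I_k\circ f^{(4)}=I_k$ then yields the intertwining identity $g_2\circ f=g_1$, after which $S_{\tilde\E_2}=S_{\tilde\E_1}$ follows by pure composition algebra. Your identity is exactly the rigorous content behind Proposition \ref{InvrntSctn}: it shows that the conjugating map $g_2\circ f\circ g_1^{-1}$ in the paper's chain is the identity, so the paper's final equality holds for a more elementary reason than the one cited, and your version of the argument stands on its own without that proposition. As a further small gain, your observation that $\im g_1=\im g_2$ when the normal forms coincide settles the domain bookkeeping for $f\colon O_1'\to O_2'$, which the paper leaves implicit.
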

\begin{proof} Suppose $\tilde \E_1=\tilde \E_2$, i.e. $S_{\tilde\E_2}= S_{\tilde\E_1}$. Then  $g_2^{(0)}\circ S_{\E_2}\circ g_2^{-1}=g_1^{(0)}\circ S_{\E_1}\circ g_1^{-1}$ or 
$S_{\E_2}= (g_2^{-1}\circ g_1)^{(0)}\circ S_{\E_1}\circ (g_2^{-1}\circ g_1)^{-1}$. Therefore,  the diffeomorphism $f$ transforming $\E_1$ to $\E_2$  is defined  by the formula $f=g_2^{-1}\circ g_1$.

Inversely, suppose that there is $f\in\mathcal{G}(M)$  transforming $\E_1$ to $\E_2$, i.e. $S_{\E_2}=f^{(0)}\circ S_{\E_1}\circ f^{-1}$. Then  $S_{\tilde\E_2}=g_2^{(0)}\circ S_{\E_2}\circ g_2^{-1}=g_2^{(0)}\circ (f^{(0)}\circ S_{\E_1}\circ f^{-1})\circ g_2^{-1}=
(g_2\circ f)^{(0)}\circ S_{\E_1}\circ (g_2\circ f)^{-1}
=(g_2\circ f)^{(0)}\circ \big((g_1^{-1})^{(0)}\circ  S_{\tilde\E_1}\circ g_1\big)\circ(g_2\circ f)^{-1}=(g_2\circ f\circ g_1^{-1})^{(0)}\circ S_{\tilde\E_1}\circ (g_2\circ f\circ g_1^{-1})^{-1}= S_{\tilde\E_1}$.
 The  last equality follows from Proposition \ref{InvrntSctn}. 
\end{proof}


%

\begin{thebibliography}{99}
\bibitem{ALV} D.V.Alekseevskiy, V.V.Lychagin, A.M.Vinogradov, {\it Fundamental ideas and conceptions of differential geometry}, Sovremennye problemy matematiki. Fundamental'nye napravleniy. Itogi nauki i techniki, Vol. 28, pp.298,      VINITI, AN SSSR, Moscow, (1988) (in Russian) [English transl.: Encyclopedia of Math. Sciences, Vol.28, pp. 298, Springer, Berlin, (1991)].
%
\bibitem{Arnld} V.I.Arnold, {\it Advanced chapters of the theory of ordinary differential equations}, Nauka, Moskow,   pp. 400, (1978) (in Russian).
%
\bibitem{Crtn} E. Cartan, {\it Sur les varietes a connexion projective}, Bull. Soc. Math. France Vol. 52, 205 -- 241,    (1924).
%
\bibitem{Grdnr} R.B. Gardner, {\it Differential geometric methods interacting control theory}, pp.117-180,   in R.W. Brockett et al., Eds., {\it Differential geometry control theory}, Birkhauser, Boston, (1983).
%
\bibitem{Li1} S. Lie, {\it Vorlesungen \"uber continuierliche gruppen}, Teubner, Leipzig, (1893).
%
\bibitem{Li2} S. Lie, {\it Theorie der transformationsgruppen}, Vol. III, Teubner, Leipzig, (1930).
%
\bibitem{Lvll} R. Liouville, {\it Sur les invariantes de certaines equationes differentielles}, 
   Jour. de l'Ecole Politechnique, Vol. 59, pp.7--88, (1889).
%
\bibitem{LchYum} V. Lychagin, V. Yumaguzhin, {\it On natural differential invariants and equivalence of  differential operators},  Journal of geometry and physics, 190 (2023), 104856. 
%
\bibitem{Shrpv1} R.A. Sharipov, {\it On the point transformations for the equation 
  $y''=P+3Qy'+3Ry'^2+Sy'^3$}, Preprint, Electronic Archive at LANL, solv-int 9706003, (1997).
\bibitem{Shrpv2} R.A. Sharipov, {\it Effective procedure of point-classification for the 
  equations $y''=P+3Qy'+3Ry'^2+Sy'^3$}, Preprint, Electronic Archive at LANL, Math.DG $\sharp$9802027, 1-35, (1998).
%
\bibitem{Thmsn} G. Thomsen, {\it Uber die topologischen Invarianten der Differentialgleichung
  $y''=f(x,y)y'\,^3+g(x,y)y'\,^2+h(x,y)y'+k(x,y)$}, Abh. Math. Sem. Hamburg. Univ. Vol. 7, pp.301-328, (1930).
 %
\bibitem{Trss1} A. Tresse, {\it Sur les invariants differentiels des groupes continus de transformations},   Acta Math. Vol. 18, pp. 1-88, (1894).
%
\bibitem{Trss2} A. Tresse, {\it D\'etermination des Invariants ponctuels de l'Equation differentielle ordinaire du       second ordre: $y''=\omega(x,y,y')$}, pp. 88, Bei S. Hirzel, 'Leipzig, (1896).
%
\bibitem{Yum2} V.A.Yumaguzhin, {\it  Differential invariants of second order ODEs, I},  Acta Applicandae Mathematicae, Vol. 109, No. 1, pp.283-313, (2010),  arXiv:0804.0674.
%
\end{thebibliography}
\end{document}